\theoremstyle{thmstyleone}%
\newtheorem{theorem}{Theorem}
\newtheorem{proposition}[theorem]{Proposition}
\newtheorem{corollary}[theorem]{Corollary}
\newtheorem{definition}[theorem]{Definition}
\newtheorem{remark}[theorem]{Remark}
\newtheorem{claim}[theorem]{Claim}
\newcommand{\op}[1]{\operatorname{#1}}
\newcommand{\bcd}{\op{Bcd}}
\newcommand{\dGH}{\operatorname{d_{GH}}}
\newcommand{\vA}{\textbf{A}}
\newcommand{\vT}{\mathcal{T}}
\newcommand{\vRips}{\mathcal{R}}
\newcommand{\vx}{\textbf{x}}
\newcommand{\vy}{\textbf{y}}
\newcommand{\vz}{\textbf{z}}
\newcommand{\vR}{\mathbb{R}}
\newcommand{\vN}{\mathbb{N}}
\newcommand{\betaexp}{\beta_\text{exp}}
\newcommand{\di}{\operatorname{d_I}}
\newcommand{\dis}{\operatorname{dis}}
\newcommand{\norm}[1]{\left\lVert#1\right\rVert}
\definecolor{darkgrn}{rgb}{0, 0.75, 0}
\begin{document}

\title[Article Title]{A Stable Measure of Chaos in Dynamical Systems using Persistent Homology}
\author[1]{\fnm{Bala} \sur{Krishnamoorthy}}\email{kbala@wsu.edu}
\author*[1]{\fnm{Elizabeth} \sur{Thompson}}\email{elizabeth.thompson1@wsu.edu}
\affil{
	\orgdiv{Department of Mathematics and Statistics},
	\orgname{Washington State University},
	\orgaddress{
		\street{14204 NE Salmon Creek Ave},
		\city{Vancouver},
		\state{Washington},
		\postcode{98686},
		\country{United States}
	}
}

\abstract{
Many real-world dynamics exhibit chaos, a phenomenon in which neighboring trajectories in the state space of a dynamical system diverge exponentially over time. 
A common measure used for quantifying the degree of this divergence is the maximal Lyapunov exponent, which relies on pairwise Euclidean distances between the trajectories at each time. 
The main limitation of the maximal Lyapunov exponent in practice is its sensitivity to small perturbations in system trajectories.
Since many real-world dynamics are likely to contain some degree of inherent noise, we are motivated to construct a chaos measure that is robust to small trajectory perturbations.
Persistent homology, the study of holes that appear in different dimensions as the points of a data set are thickened over time, has guaranteed theoretical stability under such added noise.
As such, we propose a novel, 0-dimensional persistent homology based measure of chaos termed the 0-persistence exponent and prove its theoretical stability.
We show that if a system is chaotic, then the 0-persistence exponent is non-negative by proving that positive Lyapunov measures imply non-negative 0-persistence measures, and further discuss when strict positivity of 0-persistence measures occur.
Additionally, we prove the existence of an upper bound on our measure, and show its greater experimental stability on the Lorenz and R\"{o}ssler systems describing fluid convection and taffy pulling.
We present an algorithm for computing the 0-persistence exponent given a single univariate time series with $N$ points from a dynamical system that runs in $O(N^2 \log N)$ time.
We finally show the greater experimental stability of the 0-persistence exponent on time series data depicting a Belousov-Zhabotinsky chemical reaction, which transitions from periodicity to chaos and back as the system evolves in time.
We present experimental results which verify that positive Lyapunov exponents imply positive 0-persistence exponents under sufficient conditions through high correlation between both measures on the Lorenz and R\"{o}ssler systems.}

\keywords{chaos, dynamical system, persistent homology}

\maketitle

\section{Introduction: Dynamical Systems and Chaos}
\label{sec:dynamical_systems}

Many real-world systems can be characterized by differential equations describing the dynamics of a collection of parameters over time. 
As such, they are commonly referred to as dynamical systems, which model a variety of phenomenon including but not limited to fluid dynamics, population growth, and chemical reactions. 
Dynamical systems are typically studied via their trajectories in state space, which, given an initial condition of the system, are obtained by integrating its differentials to yield a collection of time series and plotting these series together as multidimensional points in Euclidean space. 
Another common approach is to apply a sliding windows embedding to a single time series to produce a state space trajectory for the system \cite{perea2015,KANTZ199477}.
A system feature of particular interest are control parameters that drastically change the system's behavior. 
Commonly studied behaviors are periodicity and chaoticity, and the study of which control parameters influence a system to behave in either of these fashions, and the extent of chaos when present. 
Intuitively, a system is periodic if its state space exhibits ``loop-like'' structure, in which as one traverses a point cloud trajectory, locations are revisited at a given frequency. 
On the other hand, a system is considered chaotic if as one traverses two neighboring trajectories, they become exponentially farther apart over time, a phenomenon referred to as the exponential divergence of nearby trajectories \cite{hilborn2006}.
Several methods have been used to study periodicity and chaos, such as quantifying (\cite{hilborn2006,RUDISULI2013813}) or detecting (\cite{10.1063/5.0102421,10.1063/1.4983840,SHAH2025116054,TEMPELMAN2020132446}) these behaviors, as well as approximating the dimension of the geometric space occupied by a state-space trajectory (\cite{JaSc2020,GRASSBERGER1983189}).
We provide an example in \autoref{fig:ex_lorenz_system} of a well known dynamical system, the Lorenz attractor (see \autoref{ssec:Lorenz_results} for details). 
This system models fluid convection and is defined by three differentials, which when integrated produce three time series in state space ($\mathbb{R}^3$). 
We show a chaotic system on the top and a periodic system on the bottom. 

\begin{figure}[ht!]
\centering
    \includegraphics[width=0.9\textwidth]{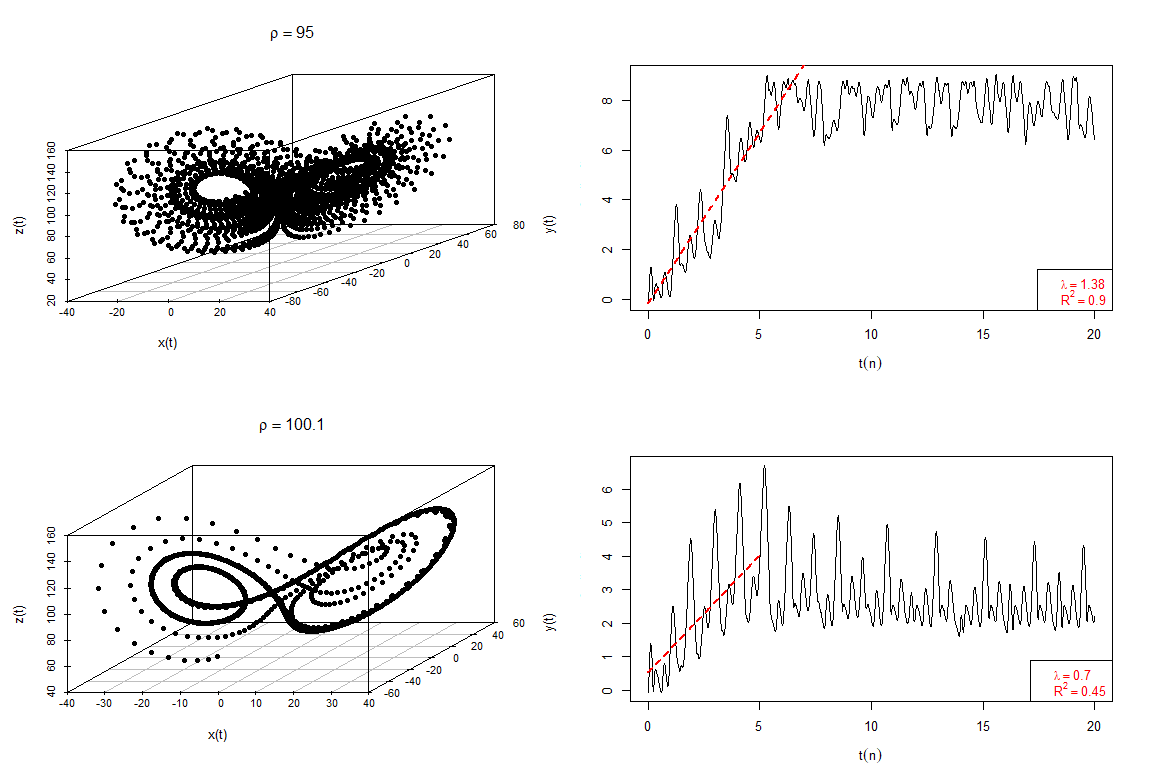}
    \caption{A chaotic (top left) and periodic (bottom left) Lorenz system, as well as their respective maximal Lyapunov exponent measurements (top right and bottom right).
    $\rho$ is the control parameter varied to detect changes in system behavior (see \autoref{ssec:Lorenz_results} for details).}
    \label{fig:ex_lorenz_system}
\end{figure}

\subsection{Limitations of Measuring Chaos}
\label{sec:limitations}

A common method for quantifying the chaos in a dynamical system given a pair of its neighboring trajectories (i.e., those whose initial conditions are close) is via the maximal Lyapunov exponent.
This exponent measures the largest rate of exponential divergence of two neighboring trajectories over time in state space, and in practice is estimated by calculating the slope of a linear regression on the logarithm of pairwise Euclidean distances (divided by the distance between initial conditions) against time.
Theoretically, positive maximal exponents indicate chaos and zero or negative values indicate periodicity.
In practice, higher positive maximal exponents indicate greater chaos, whereas lower positive, zero, or negative values indicate greater periodicity \cite{hilborn2006,RUDISULI2013813}. 
In the right column of \autoref{fig:ex_lorenz_system},
we show an example of the maximal Lyapunov exponent for a chaotic and a periodic Lorenz system.
We can see that when the system is chaotic, the estimated exponent is larger (1.38 vs 0.7).

\begin{figure}[ht!]
\centering
\includegraphics[width=0.45\textwidth]{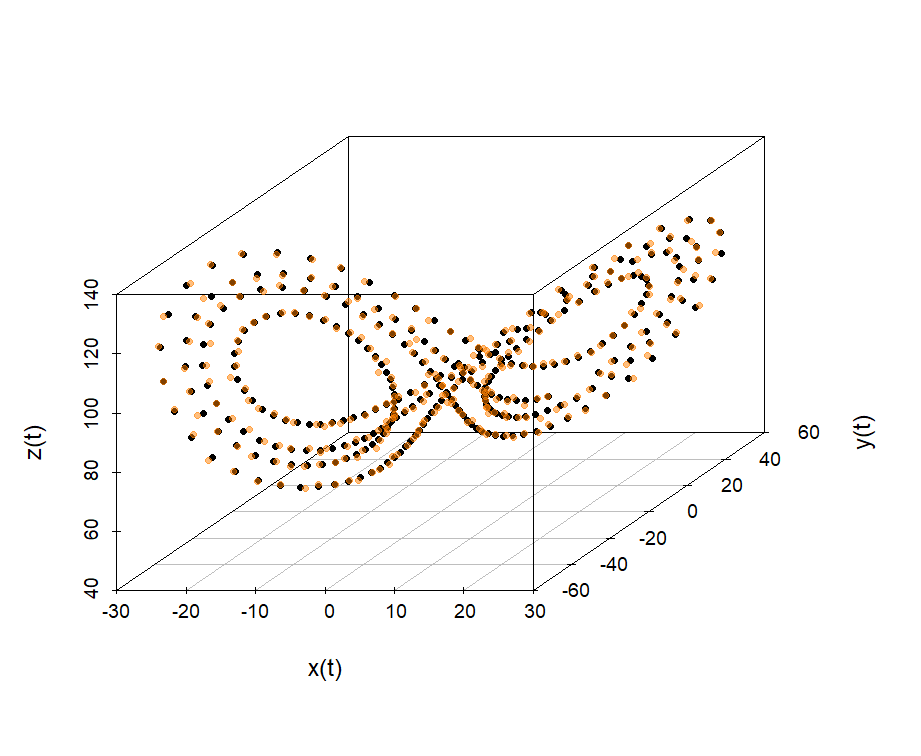}
\includegraphics[width=0.45\textwidth]{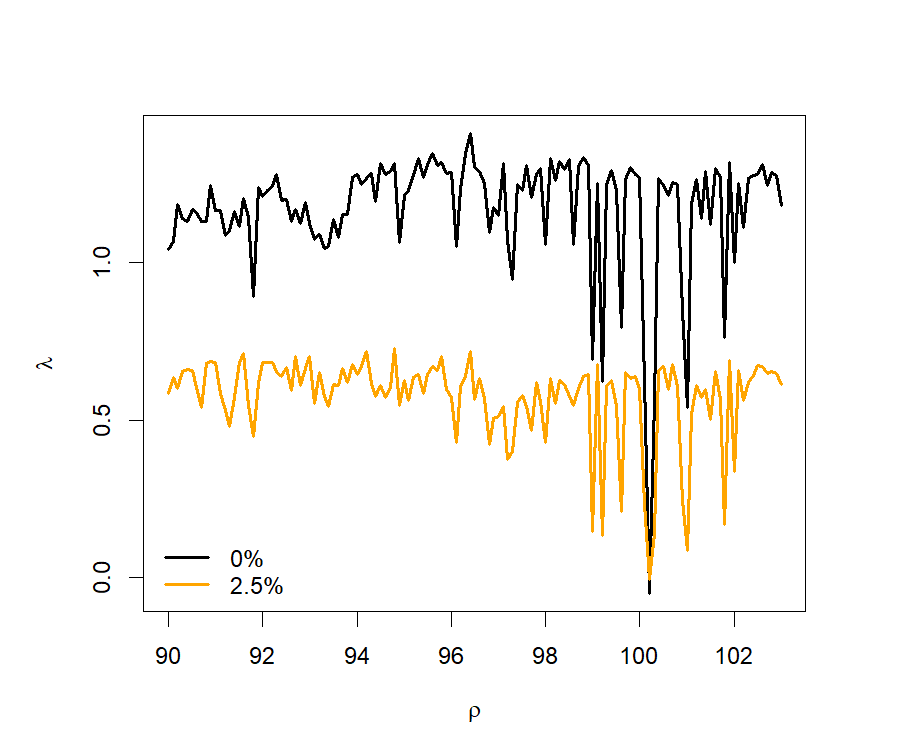}
\caption{One sample of two neighboring chaotic Lorenz trajectories with initial conditions $(-13,-14,47)$ (black) and $(-12.981,-14,007,46.999)$ (orange) with 2.5\% added Gaussian noise when $\rho = 95$ (left), and the average maximal Lyapunov exponents over 20 samples of neighboring trajectories with added Gaussian noise for increasing values of $\rho$ (right).
There is a marked decrease in the exponent even for the small amount of noise added.}
\label{fig:instability_Lyapunov_Lorenz}
\end{figure}

One of the main practical limitations of the maximal Lyapunov exponent is its lack of robustness to small perturbations in the system trajectories generated by initial conditions.
We suspect this sensitivity is due to its reliance on Euclidean distance alone for computation.
We show an example in \autoref{fig:instability_Lyapunov_Lorenz} of its instability when measuring chaos in the Lorenz system with added noise. 
We plot the average maximal Lyapunov exponent over 20 pairs of neighboring trajectories with added Gaussian noise.
To generate each sample of the first noisy trajectory, we define its initial condition as $ \vx(t_0) = (-13,-14,47) + \mathcal{N}(0,0.01)$, where we denote the Gaussian distribution by $\mathcal{N}$.
We next add Gaussian noise at level $\sigma$ to the trajectory $\{\vx(t)\} \in \vR^3$ generated by $\vx(t_0)$, where we define its  perturbation in the $d$th direction as $\vx^d_\sigma(t) = \vx_d(t) + \sigma \cdot \gamma_d$, where $\gamma_d$ is the standard deviation of $\{\vx_d(t)\}$, the $d$th dimension of $\{\vx(t)\}$.
We define each sample of the neighboring trajectory of $\vx(t_0)$ by first generating its initial condition as $\vy(t_0) = \vx(t_0) + \mathcal{N}(0,0.01)$, and then adding noise to the trajectory $\{\vy(t)\}$ that it generates similarly. 
For each sample of neighboring trajectories, we estimate $\lambda$ by computing the slope of a linear regression on the $\ln \left( \frac{||\vx_\sigma(t) - \vy_\sigma(t)||}{||\vx_\sigma(t_0) - \vy_\sigma(t_0)||} \right) $ vs $t$ plot for $t = [0,7]$, where $\vx_\sigma(t) = \left(\vx_\sigma^1(t),\vx_\sigma^2(t),\vx_\sigma^3(t)\right)$ and $\vy_\sigma(t) = \left(\vy_\sigma^1(t),\vy_\sigma^2(t),\vy_\sigma^3(t)\right)$.

Notice that when we add just 2.5\% noise (i.e., $\sigma = 0.025$) to a pair of neighboring trajectories, the average maximal Lyapunov exponent drops substantially. 
Since dynamical systems model real-world phenomena, we expect there to be some inherent trajectory noise.
On the other hand, persistent homology studies holes that appear as one thickens the points in a metric space over time, and has proven theoretical stability to small data perturbations \cite{ChdeSOu2014,perea2015}.
As such, we construct a persistence-based measure of chaos that is stable to such added noise, which we term the \textit{0-persistence exponent}.
The intuition behind this measure is that neighboring trajectories from more periodic systems will be closer together and hence their points will merge much more quickly during Vietoris-Rips filtration, while those from chaotic systems (i.e., those with exponential divergence) will contain points that take longer to merge together.
Our measure captures the rate of this merging over time. 
See \autoref{sec:persistence_exponent} for more details.

\subsection{Our Contributions and Organization}
\label{sec:contributions}

We define a novel measure of chaos in dynamical systems based on 0-dimensional persistent homology termed the \textit{0-persistence exponent} (Definition \ref{def:Persistence_exponent}).
Unlike the commonly used maximal Lyapunov exponent (Definition \ref{def:lyapunov_exponent}), the 0-persisence exponent is stable to small perturbations in state-space trajectories.
We prove the theoretical stability of the 0-persistence exponent (\autoref{thm:stability_Persistence_exp}) as well as show its greater experimental stability on the Lorenz (\autoref{fig:Lorenz_results}) and the R\"{o}ssler (\autoref{fig:Rossler_results}) systems.
We also derive theoretical upper and lower bounds on the 0-persistence exponent (Remark \ref{rmk:bounds_Persistence_exp}), which allow us to deduce the maximum possible degree of exponential divergence of two neighboring trajectories for any range of filtration parameters.
We present an algorithm to compute the 0-persistence exponent from a single time series embedding (Algorithm \ref{alg:Persistence}) and deduce its computational complexity (Remark \ref{rmk:complexity_persistence_algo}).
We finally use our algorithm to show greater experimental stability of the 0-persistence exponent when compared to the maximal Lyapunov exponent on real time series data describing the change in concentration of a catalyst in an autocatalytic chemical reaction over time (\autoref{fig:Real_results}).

We introduce definitions related to dynamical systems (\autoref{ssec:defs_systems_chaos}), persistent homology (\autoref{ssec:PH_dist_metrics}), and our 0-persistence exponent (\autoref{ssec:persistence_exp}).
We then outline previous work done in both detecting and quantifying periodicity and chaos in dynamical systems, followed by a brief review of the theoretical stability of persistent homology (\autoref{sec:related_work}).
In \autoref{sec:persistence_exponent}, we introduce our 0-persistence exponent and the intuition behind its construction, as well as a proof of its stability and its bounds (\autoref{thm:stability_Persistence_exp} and Proposition \ref{rmk:bounds_Persistence_exp}). 
We show that if a system is chaotic then our 0-persistence exponent is guaranteed to be non-negative (\autoref{thm:lambda_positive_implies_betaexp_nonnegative}), and strictly positive under sufficient conditions (\autoref{cor:positive_lambda_implies_positive_betaexp} and Remark \ref{rmk:when_lambda_positive_implies_betaexp_positive}).
In \autoref{sec:algo}, we present an algorithm for computing the 0-persistence exponent on real time series data and discuss its computational complexity.
We present experimental results that highlight the greater stability of our 0-persistence exponent on various dynamical systems with known behavior (\autoref{ssec:Lorenz_results} and \autoref{ssec:Rossler_results}), as well as on real time series data (\autoref{ssec:Real_results}).
We finally show experimentally that the Lyapunov and 0-persistence exponents are highly correlated for the Lorenz and R\"{o}ssler systems (\autoref{ssec:correlation}), which suggests that positive Lyapunov measures yield positive 0-persistence measures for sufficiently many trajectory points.
We end with discussion, limitations, and future work in \autoref{sec:discussion_future}.

\subsection{Related Work}
\label{sec:related_work}

One common method used to detect chaos in a dynamical system (Definition \ref{def:chaotic_system}) is through a bifurcation diagram \cite{hilborn2006}.
This is typically a plot of the local extrema of one time series defining a dynamical system against varying values of a control parameter.
These diagrams are used to determine the control parameters for which a system is chaotic or periodic.
A widely used measure for quantifying chaos in a system $X$ given a pair of its neighboring trajectories (Definition \ref{def:neighboring_trajectories}) in state space is the maximal Lyapunov exponent (Definition \ref{def:lyapunov_exponent}), which we denote by $\lambda$.
The maximal Lyapunov exponent measures the rate of exponential divergence (in Euclidean distance) of two neighboring trajectories over time in state space \cite{hilborn2006,RUDISULI2013813}.
Positive values of $\lambda$ typically indicate chaos, whereas zero or negative values of $\lambda$ indicate periodicity.
Persistent homology has also been used to detect system bifurcations using topological summaries called CROCKER plots \cite{10.1063/1.4983840}.
CROCKER plots of dynamical systems are heatmaps of $p$-dimensional Betti numbers for increasing filtration parameters against increasing values of a given control parameter, and Mittal and Gupta \cite{10.1063/1.4983840} experimentally discovered similarities between the behavior of the Lyapunov exponent and that of the Betti vector norm of the columns in the 1-CROCKER plot of a Lorenz system against increasing values of a control parameter.
More recently, persistent homology has been used to predict chaotic behavior in the R\"{o}ssler system \cite{10.1063/5.0268340}, where the authors used Betti curves from Vietoris-Rips filtration to produce stable classifications of dynamics for varying control parameters. 
It is known that trajectories (Definition \ref{def:trajectory}) of chaotic systems must exist in at least three dimensions \cite{hilborn2006}.
These systems can be highly complex and persistent homology has become a popular method for measuring the dimension of the geometric space embedding chaotic trajectories \cite{JaSc2020}.
The main intuition is that more chaotic systems will produce higher growth rates and hence take up larger geometric dimensions in state space. 

Persistent homology has been proven to be stable to small perturbations in input point clouds \cite{ChdeSOu2014,Le2015}.
Specifically, given a pair of metric spaces $(X,d_X)$ and $(Y,d_Y)$ and their corresponding persistence barcodes $\bcd(\vRips(X))$ and $\bcd(\vRips(Y))$ (Definition \ref{def:barcode}), the following relation is guaranteed to hold:
\begin{equation*}
    \di \Big( \bcd \big( \vRips(X) \big), \bcd \big( \vRips(Y) \big) \Big) \leq 2 \dGH(X,Y).
\end{equation*}

That is, small changes in $X$ as measured by the Gromov-Hausdorff distance (Definition \ref{def:gromov_hausdorff_dist}) lead to only small changes in $\bcd(\vRips(X))$ as measured by the interleaving distance (Definition \ref{def:interleaving_dist}).
This stability is the key motivation for our construction of the 0-persistence exponent (Definition \ref{def:Persistence_exponent}) as a stable alternative to the maximal Lyapunov exponent to measure chaos in dynamical systems.

\section{Definitions}
\label{sec:definitions}

\subsection{Dynamical Systems and Chaos}
\label{ssec:defs_systems_chaos}

This section contains an overview of necessary terminology regarding dynamical systems and chaos. 
More information can be found in, e.g., the book by Hilborn \cite{hilborn2006}.

\begin{definition}
  [Dynamical System]
  \label{def:dynamical_system}
  A \textbf{dynamical system} $X$ is a system whose dynamics can be modeled by a collection of $p$ differential equations $\left\{ \frac{dX_1(\vA,t)}{dt}, \dots, \frac{dX_p(\vA,t)}{dt} \right\}$,
  each as a function of time $t \in \vT$ and $m$ control parameters $\vA = \{ A_1, \dots, A_m\}$.
\end{definition}

\begin{definition}
  [State Space]
  \label{def:state_space}
  The \textbf{state space} of a dynamical system $X$ is given by $\Big( X_1(\vA,t), \dots, X_p(\vA,t) \Big) \in \vR^p$ for $t \in \vT$.
\end{definition}

\begin{definition}
    [Trajectory]
    \label{def:trajectory}
    The \textbf{trajectory} of a dynamical system $X$ in its state space evaluated at times $\vT = \{t_0,\dots,t_{T-1}\}$ for $T \in \vN$ is given by 
    $\vx_\vA(\vT) = \{ \vx_\vA(t_0), \dots, \vx_\vA(t_{T-1}) \}$ where $\vx_\vA(t_n) = \Big( X_1(\vA,t_n), \dots, X_p(\vA,t_n) \Big) \in \vR^p$.
\end{definition}

\begin{definition}
    [Neighboring trajectories]
    \label{def:neighboring_trajectories}
    Given an initial condition $\vx_\vA(t_0)$ of a dynamical system $X$, we define its corresponding trajectory as $\vx_\vA(\vT)$. 
    Another trajectory $\vy_\vA(\vT)$ is a \textbf{neighboring trajectory} of $\vx_\vA(\vT)$ if $\norm{\vx_\vA(t_0) - \vy_\vA(t_0)} \leq \epsilon$ for $\epsilon > 0$.
\end{definition}

\begin{definition}
    [Maximal Lyapunov Exponent]
    \label{def:lyapunov_exponent}
    Given a pair of neighboring trajectories $\vx_\vA(\vT)$ and $\vy_\vA(\vT)$ in the state space of a dynamical system $X$, the \textbf{maximal Lyapunov exponent} of $X$ is given by 
    
    \[ \lambda = \lim_{t_n \rightarrow \infty} \lim\limits_{d_0 \rightarrow 0} \frac{1}{t_n} \ln \left( \frac{d_n}{d_0} \right), \] 
    
    \noindent where $d_n = \norm{\vx_\vA(t_n) - \vy_\vA(t_n)}$ and $d_0 = \norm{\vx_\vA(t_0) - \vy_\vA(t_0)}$. 
    Unless otherwise stated, we will refer to the maximal Lyapunov exponent as the Lyapunov exponent in the remaining sections of this paper.
\end{definition}

\begin{definition}\label{def:chaotic_system}
    [Chaotic System] 
    A dynamical system is chaotic if its maximal Lyapunov exponent is positive.
\end{definition}

\subsection{Persistent Homology and Distance Metrics}
\label{ssec:PH_dist_metrics}

This section provides necessary definitions of persistent homology and distance metrics for topological summaries. 
More information can be found in relevant publications \cite{ScChLuRaOb2017,ShSh2013}.

\begin{definition}
  [Vietoris-Rips Complex]
  \label{def:Vietoris_rips_cplx}
  Given a point cloud $X \in \vR^n$ and a positive radius $\epsilon > 0$, the \textbf{Vietoris-Rips complex} of $X$ at $\epsilon$ is given by $\vRips_\epsilon(X) = \{ \sigma \subset X  : \norm{\vx_i - \vx_j}  \leq 2\epsilon, \, \forall \, \vx_i, \vx_j \in \sigma \}$.
\end{definition}

\begin{definition}
  [Vietoris-Rips Filtration]
  \label{def:Vietoris_rips_filt}
  Given a point cloud $X \in \vR^n$ and a collection of positive radii $\{ \epsilon_1=0, \dots, \epsilon_R\}$, the \textbf{Vietoris-Rips filtration} of $X$ is given by the nested sequence of subcomplexes $\vRips_{\epsilon_1}(X) = X \subset \vRips_{\epsilon_2}(X) \subset \dots \subset \vRips_{\epsilon_R}(X)$.
\end{definition}

\begin{definition}
  [Persistence Barcode]
  \label{def:barcode}
  The \textbf{persistence barcode} of a point cloud $X \in \vR^n$ is given by $\bcd(\vRips(X)) = \{[b_p^i,d_p^i) : p = 0,1, \dots, \, i = 1, \dots, m_p\}$, where $d_p^i-b_p^i$ is the lifetime of the $i$-th $p$-dimensional class of holes in the Vietoris-Rips filtration on $X$. 
\end{definition}

\begin{definition}
  [Betti Number]
  \label{def:betti_num}
  Given the persistence barcode of a point cloud $X \in \vR^n$, the \textbf{$p$-th Betti number} of $X$ at filtration radius $\epsilon > 0$ is given by $\beta_p^\epsilon(X) = \sum_{i=1}^{m_p} \delta_\epsilon \big([b_p^i,d_p^i) \big)$, where $\delta_\epsilon \big( [b_p^i,d_p^i) \big) = 1$ if $b_p^i \leq \epsilon < d_p^i$ and is zero otherwise.
\end{definition}

\begin{definition}
  [Correspondence]
  \label{def:correspondence}
  A \textbf{correspondence} $f$ between two metric spaces $(X,d_X)$ and $(Y,d_Y)$ is a relation on $X \times Y$ such that for all $\vx, \vx' \in X$, there exists some $\vy, \vy' \in Y$ such that $f(\vx)=\vx'$ and $f(\vy)=\vy'$.
\end{definition}

\begin{definition}
  [Distortion] 
  \label{def:distortion}
  Given two metric spaces $(X,d_X)$ and $(Y,d_Y)$, the distortion of a correspondence $f$ between them is given by 
  $\dis(f) = \sup\limits_{\vx,\vx'} \Big\{ \Big\vert \norm{\vx-\vx'} - \norm{f(\vx)-f(\vx')} \Big\vert \Big\}$ for $\vx,\vx' \in X$.
\end{definition} 

\begin{definition}
  [Gromov-Hausdorff Distance]
  \label{def:gromov_hausdorff_dist}
  The \textbf{Gromov-Hausdorff distance} between two metric spaces $(X,d_X)$ and $(Y,d_Y)$ is given by $\dGH(X,Y) = \frac{1}{2} \inf \{ \dis(f) \}$ among all possible correspondences $f$ on $X \times Y$. 
\end{definition}

\begin{definition}
  [Interleaving Distance]
  \label{def:interleaving_dist}
  The \textbf{interleaving distance} between two non-increasing piecewise constant functions $g,h :[0,\infty) \rightarrow [0,\infty)$ is given by $\di(g,h) = \inf \{ \alpha > 0 : g(v) \geq h(v + \alpha), \text{ } h(v) \geq g(v+\alpha) \}$ for all $v \in [0,\infty)$.
\end{definition}

\subsection{The 0-Persistence Exponent}
\label{ssec:persistence_exp}

We now state a special case of the interleaving distance that measures the change between two $0$-dimensional Betti curves.
We then provide the definition of our novel (0-dimensional) persistence-based measure of chaos in dynamical systems.

\begin{definition}
  [Interleaving Distance Between 0-Dimensional Betti Curves]
  \label{def:interleaving_dist_betti_curves}
  Given a pair of neighboring trajectories $Z = \vx_\vA(\vT) \cup \vy_\vA(\vT)$, their perturbation $Z'=\vx'_\vA(\vT) \cup \vy'_\vA(\vT)$, and a partition of filtration radii $0 \leq \epsilon_1 < \dots < \epsilon_R$, we define the 0-dimensional Betti curve of $Z$ and $Z'$ as the non-increasing piecewise constant functions $\beta_0^{\epsilon_r}(Z),\beta_0^{\epsilon_r}(Z') : [0,\epsilon_R] \rightarrow [0,2T]$.
  The \textbf{interleaving distance between the 0-dimensional Betti curves} is given by 
  $\di(\beta_0^{\epsilon_r}(Z),\beta_0^{\epsilon_r}(Z')) = \inf \{ \alpha > 0 : \beta_0^{\epsilon_r}(Z) \geq \beta_0^{\epsilon_r + \alpha}(Z'),\beta_0^{\epsilon_r}(Z') \geq \beta_0^{\epsilon_r + \alpha}(Z) \}$. 
\end{definition}

\begin{definition}
  [0-Persistence Exponent]
  \label{def:Persistence_exponent}
  The \textbf{0-persistence exponent} of a dynamical system $X$ is given by

  \[ \betaexp(X) = - \lim\limits_{\epsilon_r \rightarrow \infty} \frac{1}{\epsilon_r} \ln \left( \frac{\beta_0^{\epsilon_r}(Z)}{\beta_0^{\epsilon_1}(Z)} \right), \] 
  
  \noindent where $\beta_0^{\epsilon_r}(Z)$ is the $r$-th $0$-dimensional Betti number corresponding to the union of neighboring trajectories $Z = \vx_\vA(\vT) \cup \vy_\vA(\vT)$ in the state space of $X$.
\end{definition}

\section{The Persistence Exponent and Stability}
\label{sec:persistence_exponent}

We now introduce a novel, persistence-based measure of chaos termed the 0-persistence exponent, which we denote as $\betaexp$.
Given a pair of neighboring trajectories $\vx_\vA(\vT)$ and $\vy_\vA(\vT)$ in state space of a dynamical system $X$, we analyze the evolution of the connected components in the Vietoris-Rips filtration $\vRips(Z)$ of their union $Z = \vx_\vA(\vT) \cup \vy_\vA(\vT)$ over time.  
The main intuition is that we expect periodic trajectories to contain points that merge much more quickly at first, leaving less connected components to merge later on and hence producing a lower rate of merging long term.
On the other hand, chaotic trajectories contain points that are likely to take exponentially longer to merge as the filtration radius increases, which produce higher rates of merging in the long term.
We capture the rate of merging of connected components of $\vRips(Z)$ via the rate of growth in the negative logarithm of the number of connected components over time, where time indicates the filtration radius. 
Moreover, we hypothesize that chaotic systems with sufficiently large enough degrees of exponential divergence satisfy the relationship
$\beta_0^{\epsilon_r}(Z) = \beta_0^{\epsilon_1}(Z)e^{-\betaexp \cdot \epsilon_r}$, where the number of connected components decreases exponentially as a function of the filtration radius.
Hence, the rate of exponential divergence is given by 

\begin{equation*}
    \betaexp(X) = - \frac{1}{\epsilon_r} \ln \left( \frac{\beta_0^{\epsilon_r}(Z)}{\beta_0^{\epsilon_1}(Z)} \right).
\end{equation*}

Using stability properties of persistent homology, we prove that small changes in a pair of neighboring trajectories of a dynamical system guarantee small changes in the $0$-dimensional Betti curve of their union. 
Hence we prove stability of the 0-persistence exponent to small perturbations in the neighboring trajectories of dynamical system's state space. 
We finally deduce that under certain conditions, if a system is chaotic (i.e. $\lambda > 0$), then the 0-persistence exponent will be positive (i.e. $\betaexp > 0$) (Remark \ref{rmk:when_lambda_positive_implies_betaexp_positive}).
We include experimental results that support this claim in the following section, as well as prove that positive $\lambda$ guarantees non-negative $\betaexp$ in \autoref{thm:lambda_positive_implies_betaexp_nonnegative} of this section. 

\begin{theorem}
  [Stability of the 0-Persistence Exponent]
  \label{thm:stability_Persistence_exp}
  Let $(Z = \vx_\vA(\vT) \cup \vy_\vA(\vT),\norm{\cdot})$ and $(Z' = \vx'_\vA(\vT) \cup \vy'_\vA(\vT),\norm{\cdot})$ be two pairs of neighboring trajectories in the state space of a dynamical system $X$ where $\vT = \{t_0,\dots,t_{T-1}\}$.
  Given a partition $0 = \epsilon_1 < \dots < \epsilon_r < \dots <\epsilon_R$ of Vietoris-Rips filtration radii, the following relation holds:
  \begin{align*}
    \di(\beta_0^{\epsilon_r}(Z),\beta_0^{\epsilon_r}(Z')) \leq 2 \dGH(Z,Z'),
  \end{align*}
where $\beta_0^{\epsilon_r}(Z),\beta_0^{\epsilon_r}(Z'):[0,\epsilon_R]\rightarrow [0,2T]$.
\end{theorem}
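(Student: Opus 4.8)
The plan is to prove the inequality directly from a Gromov--Hausdorff correspondence rather than by invoking the barcode stability estimate quoted in \autoref{sec:related_work}; for the $0$-dimensional Betti curve the cleanest and tightest argument tracks connected components of the Vietoris--Rips graph. Throughout I would use that $\beta_0^{\epsilon}(Z)$ is exactly the number of connected components of the $1$-skeleton of $\vRips_{\epsilon}(Z)$ (two vertices span an edge iff their distance is at most $2\epsilon$), so it is a non-increasing piecewise-constant function of $\epsilon$ with $\beta_0^{0}(Z)=2T$. Unwinding \autoref{def:interleaving_dist_betti_curves}, it then suffices to exhibit, for every $\alpha>2\dGH(Z,Z')$ (in fact $\alpha>\dGH(Z,Z')$ will do), the two inequalities $\beta_0^{\epsilon}(Z)\geq\beta_0^{\epsilon+\alpha}(Z')$ and $\beta_0^{\epsilon}(Z')\geq\beta_0^{\epsilon+\alpha}(Z)$ for all $\epsilon\in[0,\epsilon_R]$.

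First I would fix $\eta>\dGH(Z,Z')$ and use \autoref{def:gromov_hausdorff_dist} to obtain a correspondence $R\subseteq Z\times Z'$ with $\dis(R)<2\eta$; by \autoref{def:distortion} this gives $\bigl|\,\norm{\vz_1-\vz_2}-\norm{\vz_1'-\vz_2'}\,\bigr|<2\eta$ whenever $(\vz_1,\vz_1'),(\vz_2,\vz_2')\in R$. The central step is to promote $R$ to a surjection on components. Given a component $C$ of $\vRips_{\epsilon}(Z)$, I would show its image $R(C)=\{\,\vz'\in Z' : (\vz,\vz')\in R,\ \vz\in C\,\}$ lies in a single component of $\vRips_{\epsilon+\eta}(Z')$: if $\vz_1,\vz_2\in C$ are joined by a path whose consecutive vertices are within $2\epsilon$, the distortion bound makes the $R$-images of those vertices consecutively within $2\epsilon+2\eta=2(\epsilon+\eta)$, hence joined in $\vRips_{\epsilon+\eta}(Z')$. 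This makes $C\mapsto\Phi(C)$ well defined, and totality of $R$ on the $Z'$-side forces $\Phi$ to be onto, since any $\vz'\in Z'$ has a partner $\vz\in Z$ and so its component equals $\Phi$ of the component of $\vz$.

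A surjection of finite sets yields $\beta_0^{\epsilon}(Z)=\lvert\pi_0(\vRips_{\epsilon}(Z))\rvert\geq\lvert\pi_0(\vRips_{\epsilon+\eta}(Z'))\rvert=\beta_0^{\epsilon+\eta}(Z')$, and the reversed correspondence $R^{-1}$ gives the symmetric inequality; since each is preserved under enlarging the shift (the curves are non-increasing), letting $\eta\downarrow\dGH(Z,Z')$ yields $\di(\beta_0^{\epsilon_r}(Z),\beta_0^{\epsilon_r}(Z'))\leq\dGH(Z,Z')\leq 2\dGH(Z,Z')$, which already implies the stated bound. The main obstacle I anticipate is bookkeeping rather than conceptual: carefully verifying the well-definedness and surjectivity of $\Phi$ (the path argument together with two-sided totality of the correspondence) and handling the domain boundary by extending each Betti curve as the constant $\beta_0^{\epsilon_R}$ for arguments exceeding $\epsilon_R$, so that every shifted evaluation $\beta_0^{\epsilon+\alpha}$ is defined. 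I would also record one subtlety motivating this route: the bottleneck/interleaving distance between the barcodes does not by itself control the Betti-curve interleaving, since a medium-length $0$-bar can be matched cheaply to the diagonal while still shifting the entire connectivity drop, so the direct correspondence argument is the appropriate tool here.
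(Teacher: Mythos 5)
Your argument is correct, and it follows the same basic strategy as the paper's proof --- fix a correspondence realizing (or nearly realizing) $\dGH(Z,Z')$, use the distortion bound to map edges of $\vRips_{\epsilon}(Z)$ to edges of a slightly fattened complex on $Z'$, and conclude that the number of components can only drop. Where you genuinely diverge is in how the component count is extracted: the paper asserts $\vRips_{\epsilon_r}(Z)\subseteq\vRips_{\epsilon_r+2\gamma}(Z')$, which is not literally meaningful since the two complexes have different vertex sets; your replacement --- the correspondence induces a well-defined, surjective map $\Phi$ on $\pi_0$ via the path argument plus two-sided totality --- is the statement that actually yields $\beta_0^{\epsilon}(Z)\geq\beta_0^{\epsilon+\eta}(Z')$, so your version repairs a real (if routine) gap. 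You also handle the infimum correctly by working with $\eta>\dGH(Z,Z')$ and passing to the limit, whereas the paper assumes a correspondence attaining $\dis(f^\ast)=2\gamma$ exists; for finite point sets this is harmless, but your phrasing is the one that survives scrutiny. Finally, note that your sharper conclusion $\di\leq\dGH(Z,Z')$ depends on \autoref{def:Vietoris_rips_cplx}, where edges appear at distance $\leq 2\epsilon$; the paper's own claim proofs instead take an edge of $\vRips_{\epsilon_r}$ to mean $\norm{\mathbf{a}-\mathbf{b}}\leq\epsilon_r$, under which convention the required shift is $2\gamma$ and the stated factor of $2$ is tight. Either way your bound implies the theorem as stated, but you should flag which convention you are using, since the improvement by a factor of $2$ is an artifact of that choice rather than of the component-counting argument. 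Your closing caveat about barcode interleaving not controlling the Betti-curve interleaving is overcautious for $H_0$ of a Rips filtration (the structure maps are surjective, so module interleaving does transfer, up to a boundary issue near $\epsilon=0$), but it does not affect the proof.
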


\begin{proof}
    Suppose $\dGH(Z,Z') = \gamma$. 
    Then for all correspondences $f$ on $Z \times Z'$, we have that $\inf \{ \dis(f) \} = 2 \gamma$.
    Since $Z$ and $Z'$ are both finite, there must exist some correspondence $f^\ast$ such that
    \begin{align*}
        \dis(f^\ast) = \sup_{\vz_i,\vz_j \in Z} \Big\{ \Big\vert \norm{\vz_i-\vz_j} - \norm{f^\ast(\vz_i) - f^\ast(\vz_j)} \Big\vert \Big\} = 2\gamma.
    \end{align*} 
    \noindent Hence for all $\vz_i,\vz_j \in Z$, $\Big\vert \norm{\vz_i-\vz_j} - \norm{f^\ast(\vz_i) - f^\ast(\vz_j)} \Big\vert \leq 2\gamma$, which implies that 
    \begin{align}
        \norm{\vz_i-\vz_j} & \leq \norm{f^\ast(\vz_i)-f^\ast(\vz_j)} + 2\gamma \label{eq:stab1} \\
        \norm{f^\ast(\vz_i)-f^\ast(\vz_j)} & \leq \norm{\vz_i-\vz_j} + 2\gamma\,. \label{eq:stab2} 
    \end{align}

    \begin{claim} \label{clm:stbpf1}
      $\beta_0^{\epsilon_r}(Z) \geq \beta_0^{\epsilon_r+2\gamma}(Z')$.
    \end{claim}

    \begin{proof}
      Let $(\mathbf{a},\mathbf{b})$ be an edge in $\vRips_{\epsilon_r}(Z)$. 
      Then $\norm{\mathbf{a}-\mathbf{b}} \leq \epsilon_r$ by definition.
      Hence by (\autoref{eq:stab2}), $\norm{f^\ast(\mathbf{a})-f^\ast(\mathbf{b})} \leq \norm{\mathbf{a}-\mathbf{b}} + 2 \gamma \leq \epsilon_r + 2 \gamma$.
      Hence $(f^\ast(\mathbf{a}), f^\ast(\mathbf{b}))$ is an edge in $\vRips_{\epsilon_r+2\gamma}(Z')$ and $\vRips_{\epsilon_r}(Z) \subseteq \vRips_{\epsilon_r+2\gamma}(Z')$.
      That is, the Vietoris-Rips complex of $Z'$ at $\epsilon_r + 2\gamma$ is more connected than the Vietoris-Rips complex of $Z$ at $\epsilon_r$, and hence our claim holds.
    \end{proof}

    \begin{claim} \label{clm:stbpf2}
      $\beta_0^{\epsilon_r}(Z') \geq \beta_0^{\epsilon_r+2\gamma}(Z)$.
    \end{claim}

    \begin{proof}
      Let $(\mathbf{a}', \mathbf{b}')$ be an edge in $\mathcal{R}_{\epsilon_r}(Z')$. 
      Then $\norm{\mathbf{a}'-\mathbf{b}'} \leq \epsilon_r$ by definition. 
      Hence there exists some $\mathbf{a},\mathbf{b} \in Z$ such that $f^\ast(\mathbf{a}) = \mathbf{a}'$ and $f^\ast(\mathbf{b}) = \mathbf{b}'$.
      Hence by  (\autoref{eq:stab1}), $\norm{\mathbf{a}-\mathbf{b}} \leq \norm{\mathbf{a}'-\mathbf{b}'} + 2 \gamma \leq \epsilon_r + 2 \gamma$.
      Therefore, $(\mathbf{a}, \mathbf{b})$ is an edge in $\vRips_{\epsilon_r+2\gamma}(Z)$.
      Then $\vRips_{\epsilon_r}(Z') \subseteq \vRips_{\epsilon_r+2\gamma}(Z)$ and hence the Vietoris-Rips complex of $Z$ at $\epsilon_r + 2\gamma$ is more connected than that of $Z'$ at $\epsilon_r$ and therefore our claim is satisfied.
    \end{proof}

    Claims \ref{clm:stbpf1} and \ref{clm:stbpf2} together imply that $2\gamma$ is one possible candidate for the interleaving distance between the 0-dimensional Betti curves. 
    Since this interleaving distance is the infimum of all possible solutions, we get that
    $\di(\beta_0^{\epsilon_r}(Z),\beta_0^{\epsilon_r}(Z')) \leq 2\gamma = 2\dGH(Z,Z')$.
\end{proof}

\begin{proposition}
  [Bounds on the 0-Persistence Exponent]
  \label{rmk:bounds_Persistence_exp}
  Let $\epsilon_{r_{\min}} < \epsilon_{r_{\max}}$ and assume that $\epsilon_1$ need not be zero.
  Let $\betaexp$ be the 0-persistence exponent of a dynamical system $X$ on a pair of neighboring trajectories $Z = \vx_\vA(\vT) \cup \vy_\vA(\vT)$ for $\vT = \{t_0,\dots,t_{T-1}\}$.
  If $\betaexp$ was obtained via linear regression on $[\epsilon_{r_{\min}},\epsilon_{r_{\max}}]$, then
  
  \[ 0 \leq \betaexp(X) \leq \frac{- \log \left( \frac{1}{2T} \right)}{\epsilon_{r_{\max}} - \epsilon_{r_{\min}}}.\]
\end{proposition}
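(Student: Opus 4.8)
The plan is to reduce the regression-based persistence exponent to a secant rate of the log-Betti curve over the fitting window, and then control that rate using only two elementary facts about $\beta_0^{\epsilon}(Z)$: it is non-increasing in $\epsilon$, and it is confined to $[1,2T]$. First I would record these facts. Since increasing the filtration radius can only merge connected components of $\vRips_\epsilon(Z)$ and never split them, the map $\epsilon \mapsto \beta_0^{\epsilon}(Z)$ is non-increasing; and since $Z = \vx_\vA(\vT) \cup \vy_\vA(\vT)$ consists of $2T$ points, we have $1 \le \beta_0^{\epsilon}(Z) \le 2T$ for every $\epsilon$ in the window (a single component once everything is connected, at most $2T$ when all points are isolated).

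Next I would make precise the phrase ``obtained via linear regression on $\Delta\epsilon$.'' Under the hypothesized log-linear decay $\beta_0^{\epsilon_r}(Z) = \beta_0^{\epsilon_1}(Z)\,e^{-\betaexp\cdot\epsilon_r}$ assumed just before the statement, the curve $\epsilon_r \mapsto \log \beta_0^{\epsilon_r}(Z)$ is exactly affine with slope $-\betaexp$, so the least-squares fit over the radii in $[\epsilon_{r_{\min}},\epsilon_{r_{\max}}]$ recovers this slope and $\betaexp$ equals the window-averaged (secant) rate
\[
  \betaexp(X) = \frac{1}{\Delta\epsilon}\,\log\!\left(\frac{\beta_0^{\epsilon_{r_{\min}}}(Z)}{\beta_0^{\epsilon_{r_{\max}}}(Z)}\right).
\]
Both bounds then fall out immediately. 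For the lower bound, monotonicity gives $\beta_0^{\epsilon_{r_{\min}}}(Z) \ge \beta_0^{\epsilon_{r_{\max}}}(Z)$, so the ratio is at least $1$, its logarithm is nonnegative, and $\betaexp(X) \ge 0$. For the upper bound, the range constraint gives $\beta_0^{\epsilon_{r_{\min}}}(Z) \le 2T$ and $\beta_0^{\epsilon_{r_{\max}}}(Z) \ge 1$, so the ratio is at most $2T$; taking logarithms and dividing by $\Delta\epsilon$ yields $\betaexp(X) \le \log(2T)/\Delta\epsilon = -\log(1/(2T))/\Delta\epsilon$, the extremal rate attained when $2T$ isolated components collapse to one across the window.

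The step I expect to be the main obstacle is the middle one: justifying the identification of the regressed $\betaexp$ with the secant rate. Under the log-linear model the fit is exact and the identity above is automatic, but for a genuine least-squares fit to non-affine data the slope need not lie between the endpoint rates, so I would either adopt the log-linear decay as a standing hypothesis or explicitly interpret $\betaexp$ as the window-averaged decay rate, for which the two bounds hold verbatim. The lower bound is in fact robust even for a raw least-squares slope: the oppositely-sorted Chebyshev sum inequality applied to the increasing radii $\epsilon_r$ and the non-increasing values $\log\beta_0^{\epsilon_r}(Z)$ forces the fitted slope to be nonpositive, hence $\betaexp(X)\ge 0$. Everything else reduces to the monotonicity of $\beta_0^{\epsilon}(Z)$ and its confinement to $[1,2T]$, and so requires no real computation.
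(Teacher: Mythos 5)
Your proposal is correct and follows essentially the same route as the paper: both arguments rest on the monotonicity of $\epsilon \mapsto \beta_0^{\epsilon}(Z)$ and its confinement to $[1,2T]$, which bound the logged Betti ratio in $[0,-\log(1/(2T))]$ before dividing by $\Delta\epsilon$. You are in fact more careful than the paper about the one delicate step --- identifying the regressed slope with the secant rate of the log-Betti curve, which the paper passes over with ``hence if we approximate $\betaexp$ on $[\epsilon_{r_{\min}},\epsilon_{r_{\max}}]$ we obtain our result'' --- and your Chebyshev-sum remark for the lower bound is a worthwhile addition.
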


\begin{proof}
    Notice that $1 \leq \beta_0^{\epsilon_r}(Z) \leq \beta_0^{\epsilon_1}(Z) \leq 2T$.
    Then
    \begin{align*}
      \frac{1}{2T} \leq & \frac{\beta_0^{\epsilon_r}(Z)}{2T} \leq \frac{\beta_0^{\epsilon_1}(Z)}{2T} \leq 1 \\
      & \implies \log \left( \frac{1}{2T} \right)  \leq \log \left( \frac{\beta_0^{\epsilon_r}(Z)}{2T} \right) \leq \log \left( \frac{\beta_0^{\epsilon_1}(Z)}{2T} \right) \leq 0 \\
      & \implies - \log \left( \frac{1}{2T} \right) \geq - \log \left( \frac{\beta_0^{\epsilon_r}(Z)}{2T}  \right) \geq - \log \left( \frac{\beta_0^{\epsilon_1}(Z)}{2T}  \right) \geq 0.
    \end{align*}
    Hence if we approximate $\betaexp$ on $[\epsilon_{r_{\min}},\epsilon_{r_{\max}}]$ (i.e., for all $r \in r_{\min}, \dots, r_{\max}$), then we obtain our result. 
\end{proof}

\begin{theorem}[Positive Lyapunov Measures Yield Non-negative 0-persistence Measures] \label{thm:lambda_positive_implies_betaexp_nonnegative}
    Let $\epsilon(t) = ||\vx_A(t) - \vy_A(t)||$ for $t \geq 0 \in T$.
    Suppose 
    \[ \betaexp = - \lim\limits_{t \rightarrow \infty} \lim\limits_{\epsilon(0) \rightarrow 0} \frac{1}{\epsilon(t)} \ln \left( \frac{\beta_0^{\epsilon(t)}(\vx_A(T) \cup \vy_A(T))}{\beta_0^{\epsilon(0)}(\vx_A(T) \cup \vy_A(T))} \right), \]
    \noindent for $\beta_0^{\epsilon(t)} = \dim(H_0(\vRips(\vx_A(T) \cup \vy_A(T))))$. 
    Then $\lambda > 0 \implies \betaexp \geq 0$. 
\end{theorem}

\begin{proof}
By our assumptions, we have that 
    \begin{align*}
        \lambda > 0 & \implies \frac{1}{t} \ln \left( \frac{\epsilon(t)}{\epsilon(0)} \right) > 0 \\
        & \implies \ln \left( \frac{\epsilon(t)}{\epsilon(0)} \right) > 0 \text{ since } t > 0 \\
        & \implies \frac{\epsilon(t)}{\epsilon(0)} > 1 \\
        & \implies \epsilon(t) > \epsilon(0)\,.
    \end{align*}
    \noindent Since $\beta_0^{\epsilon(t)}$ is a monotone decreasing function of $\epsilon(t)$, the above bound implies that 
    \begin{align*}
        \beta_0^{\epsilon(t)} \leq \beta_0^{\epsilon(0)} & \implies  \frac{\beta_0^{\epsilon(t)}}{\beta_0^{\epsilon(0)}} \leq 1 \\
        & \implies \ln \left( \frac{\beta_0^{\epsilon(t)}}{\beta_0^{\epsilon(0)}} \right) \leq 0 \\
        & \implies - \ln \left( \frac{\beta_0^{\epsilon(t)}}{\beta_0^{\epsilon(0)}} \right) \geq 0 \\
        & \implies - \frac{1}{\epsilon(t)} \ln \left( \frac{\beta_0^{\epsilon(t)}}{\beta_0^{\epsilon(0)}} \right) \geq 0 \text{ since } \epsilon(t) > 0 \\
        & \implies \betaexp \geq 0.
    \end{align*}
\end{proof}

\begin{corollary}[When Positive Lyapunov Measures Imply Positive 0-persistence Measures in a Subcomplex]\label{cor:positive_lambda_implies_positive_betaexp}
    Let $\epsilon(t) = ||\vx_A(t) - \vy_A(t)||$ for $t \geq 0 \in T$.
    Define $\text{S}\vRips_\epsilon \left( \vx_A(T) \cup \vy_A(T) \right)$ to be the sub-complex of $\vRips_\epsilon \left( \vx_A(T) \cup \vy_A(T) \right)$ which only contains edges connecting points in $\vx_A(T)$ to points in $\vy_A(T)$ at the same time.
    Suppose 
    \[ \betaexp = - \lim\limits_{t \rightarrow \infty} \lim\limits_{\epsilon(0) \rightarrow 0} \frac{1}{\epsilon(t)} \ln \left( \frac{\beta_0^{\epsilon(t)}(\vx_A(T) \cup \vy_A(T))}{\beta_0^{\epsilon(0)}(\vx_A(T) \cup \vy_A(T))} \right), . \]
    \noindent for $\beta_0^{\epsilon(t)} = \dim(H_0(\text{S}\vRips(\vx_A(T) \cup \vy_A(T))))$. 
    Then $\lambda > 0 \implies \betaexp > 0$.
\end{corollary}

\begin{proof}
    By similar arguments as in the proof of  \autoref{thm:lambda_positive_implies_betaexp_nonnegative}, $\lambda > 0 \implies \epsilon(t) > \epsilon(0)$.
    By our definition of $\text{S}\vRips(\vx_A(T) \cup \vy_A(T))$, at every $\epsilon(t)$, one merge happens and hence the number of connected components decreases by one.
    This implies that 
    \[ \beta_0^{\epsilon(t)} < \beta_0^{\epsilon(0)}. \]
    \noindent Then by a similar argument as in the proof of \autoref{thm:lambda_positive_implies_betaexp_nonnegative}, this implies that $\betaexp > 0$.
\end{proof}

\begin{remark}\label{rmk:when_lambda_positive_implies_betaexp_positive}
    [When Positive Lyapunov Measures Imply Positive 0-persistence Measures in General]
    Again define the 0-persistence exponent by 
    \[ \betaexp = - \lim\limits_{t \rightarrow \infty} \lim\limits_{\epsilon(0) \rightarrow 0} \frac{1}{\epsilon(t)} \ln \left( \frac{\beta_0^{\epsilon(t)}(\vx_A(T) \cup \vy_A(T))}{\beta_0^{\epsilon(0)}(\vx_A(T) \cup \vy_A(T))} \right), \]
    \noindent for $\epsilon(t) = ||\vx_A(t)-\vy_A(t)||$ for $t \geq 0 \in T$. 
    If there exists vertices $\mathbf{z}$ $\mathbf{w}$ in $\vx_A(T) \cup \vy_A(T)$ belonging to two disconnected components of $\vRips_{\epsilon(0)}(\vx_A(T) \cup \vy_A(T)$ such that $\epsilon(0) < ||\mathbf{z} - \mathbf{w}|| \leq \epsilon(t)$, then $\beta_0^{\epsilon(t)} < \beta_0^{\epsilon(0)}$.
    Hence by the same arguments as in \autoref{thm:lambda_positive_implies_betaexp_nonnegative} and Corollary \ref{cor:positive_lambda_implies_positive_betaexp}, $\lambda > 0 \implies \betaexp > 0$.
\end{remark}

\section{An Algorithm for Computing the 0-Persistence Exponent}
\label{sec:algo}

With real data, we often only have access to a single time series.
In this case, chaos can be quantified locally within a single time series embedding.
The Kantz algorithm~\cite{KANTZ199477} estimates the Lyapunov exponent of a single trajectory in the state space embedding of a univariate time series. 
We present our pseudocode for computing the Kantz Lyapunov exponent, inspired by \cite{KANTZ199477}, in Algorithm \ref{alg:Kantz}.
We note that in \cite{KANTZ199477}, although they find stability using a similar method in systems like the H\`{e}non map, they find less stability in others like the Ikeda map.
As well, \cite{KANTZ199477} only test their methods on time series generated by models of dynamical systems, whereas we suspect that real world time series are inherently noisier and therefore may have greater effects on the Kantz estimate when subject to added noise.
Given the $i$-th observation $\vx_i \in \vR^d$ in a time series embedding with $N$ points, the average Euclidean distance between it and its neighbors is given by

\begin{equation} \label{eq:Li}
  L_i=\frac{1}{|N_\delta(\vx_i)|} \sum_{j=1}^{|N_\delta(\vx_i)|} \norm{\vx_i - \vx_j},
\end{equation}  

where $\vx_j$ is the $j$-th neighbor of $\vx_i$ in its $\delta$ neighborhood $N_\delta(\vx_i) = \{ \vx_j : \norm{\vx_i - \vx_j} < \delta, i \neq j\}$.
Moving forward $t$ steps in time, the $\delta$-neighborhood of the $(i+t)$-th point is given by $N_\delta(\vx_{i+t}) = \{ \vx_{j+t}:\vx_j \in N_\delta(\vx_i)\}$ for $t=1, \dots, t_\text{max}$. 
Similarly, the average Euclidean distance between $\vx_{i+t}$ and its neighbors is given by $\displaystyle L_{i+t}=\frac{1}{|N_\delta(\vx_{i+t})|}\sum_{j=1}^{|N_\delta(\vx_{i+t})|}\norm{\vx_{i+t}-\vx_{j+t}}$.
The scaling factor for the $i$-th observation is given by $\displaystyle \Delta S(t,i) = \frac{L_{i+t}}{L_i}$.
Fitting a linear regression to the average logarithm of scaling factors over all observations against each time step and computing the slope, we obtain an estimate of the maximal Lyapunov exponent $\lambda$ of the underlying system from which the time series data was obtained. 

\begin{algorithm}[ht!]
  \caption{Kantz algorithm: estimates the maximal Lyapunov exponent of a univariate time series}
  \label{alg:Kantz}
  \textbf{Inputs:}$X$, $\delta$, $t_{\text{max}}$, $t_{\text{initial}}$, $t_{\text{final}}$

    \smallskip
    
\hspace{0.5cm} Let $N$ be the number of observations in the embedding $X$ of a single time series. \\
\hspace{0.5cm} for $\vx_i \in X$, \\
\hspace{1cm} find all $\vx_j \in N_\delta(\vx_i)$, excluding $\vx_i$ itself \\
\hspace{1cm} compute $L_i$ (\autoref{eq:Li}). \\
\hspace{1cm} for $t \in 1, \dots, t_{\text{max}}$, \\
\hspace{1.5cm} identify all $\vx_{j+t} \in N_\delta(\vx_{i+t})$, excluding $\vx_{i+t}$ itself \\
\hspace{1.5cm} compute $L_{i+t}$ \\
\hspace{1.5cm} store $\Delta S(t,i) = \frac{L_{i+t}}{L_i}$ \\
\hspace{0.75cm} Fit a linear regression to $\ln \left( \frac{1}{N} \sum_{i=1}^N\Delta S(t,i) \right)$ against $t$ on $[t_\text{initial},t_{\text{max}}]$ and define its slope as $\lambda$. \\
   
\smallskip

\textbf{Return:} $\lambda$
\end{algorithm}

We construct a similar pipeline to that of Kantz algorithm in order to estimate the 0-persistence exponent of a real univariate time series via its state space embedding, which we call the \emph{0-persistence algorithm} (Algorithm \ref{alg:Persistence}).
Given the $i$-th observation $\vx_i$ in an embedding with $N$ points, the $0$-dimensional (scaled) Betti vector from Vietoris-Rips filtration on its $\delta$-neighborhood $\bar{N}_\delta(\vx_i) = \{\vx_j:\norm{\vx_i-\vx_j} < \delta\}$ is given by 
\begin{align*}
    \Delta \beta(\cdot,i) = \left( 1, \dots, \frac{\beta_0^{\epsilon_r}(\bar{N}_\delta(\vx_i))}{\beta_0^{\epsilon_1}(\bar{N}_\delta(\vx_i))}, \dots, \frac{\beta_0^{\epsilon_R}(\bar{N}_\delta(\vx_i))}{\beta_0^{\epsilon_1}(\bar{N}_\delta(\vx_i))} \right),
\end{align*} 
where $\displaystyle \Delta \beta(r,i) = \frac{\beta_0^{\epsilon_r}(\bar{N}_\delta(\vx_i))}{\beta_0^{\epsilon_1}(\bar{N}_\delta(\vx_i))}$ corresponds to the growth or `stretch' factor in the number of connected components in the Vietoris-Rips complex of the $\delta$ neighborhood of $\vx_i$ at the $r$-th filtration radius.
Fitting a linear regression to the negative logarithm of the average Betti number among all observations against each filtration radius and computing the slope, we obtain an estimate of the 0-persistence exponent $\betaexp$ of the underlying system from which the time series data was obtained.

\begin{algorithm}[ht!]
  \caption{0-persistence algorithm: estimates the 0-persistence exponent of a univariate time series}
\label{alg:Persistence}
\textbf{Inputs:} $X$, $\delta$, $\text{epslist} = \{ 0 < \dots < \epsilon_r < \dots < \epsilon_R \}$, $\epsilon_\text{initial}$, $\epsilon_\text{final}$

    \smallskip
    \hspace{0.5cm} Let $N$ be the number of observations in the embedding $X$ of a single time series. \\
    \hspace{0.5cm} for $\vx_i \in X$, \\
    \hspace{1cm} find all $\vx_j \in \bar{N}_\delta(\vx_i)$, \textit{including} $\vx_i$ \\
    \hspace{1cm} initialize $\text{count} = 0$ \\
    \hspace{1cm} for each $\epsilon_r \in \text{epslist}$, \\
    \hspace{1.5cm} for each $[b,d) \in \bcd_0 \Big( \vRips(\bar{N}_\delta(\vx_i)) \Big)$, \\
    \hspace{2cm} if $\epsilon_r \geq b$ and $\epsilon_r < d$, \\
    \hspace{2cm} $\text{count} = \text{count} + 1$ \\
    \hspace{1.5cm} let $\beta_0^{\epsilon_r}(\bar{N}_\delta(\vx_i)) = \text{count}$ \\
    \hspace{1.5cm} store $\Delta\beta(r,i) = \frac{\beta_0^{\epsilon_r}(\bar{N}_\delta(\vx_i))}{\beta_0^{\epsilon_1}(\bar{N}_\delta(\vx_i))}$ as the $r$th term in the Betti vector $\Delta \beta(\cdot,i)$ \\
    \hspace{0.5cm} Fit a linear regression to $-\ln\left (\frac{1}{N} \sum_{i=1}^N \Delta\beta(r,i) \right)$ against $\epsilon_r$ on $[\epsilon_\text{initial},\epsilon_\text{final}]$ and define its slope as $\betaexp$. \\
\textbf{Return:} $\betaexp$
\end{algorithm}

\begin{remark}
    [Complexity of the 0-persistence algorithm]
    \label{rmk:complexity_persistence_algo}
    In the worst case (for large $\delta$), each observation $\vx_i$ in the embedding of a time series with $N$ points has $N-1$ neighbors so that $|N_\delta(\vx_i)| = N-1$.
    Hence computing the $0$-dimensional persistent homology of $N_\delta(\vx_i)$ requires $O(N \log N)$ computations \cite{Gl2023}.
    We repeat this computation at most $N$ times, once for every observation $\vx_i$ in the embedding.
    Hence Algorithm \ref{alg:Persistence} takes $O(N^2 \log N)$ time. 
\end{remark}

\section{Results}
\label{sec:results}

We compare the stability of our 0-persistence exponent and the Lyapunov exponent when measuring chaos in two models of dynamical systems with known behavior---the Lorenz and R\"{o}ssler systems.
These systems model fluid convection and taffy pulling, respectively.
We also compare the stability of both measures computed using algorithms \ref{alg:Kantz} and \ref{alg:Persistence} on real univariate time series data of the Belousov-Zhabotinsky autocatalytic chemical reaction reported by Wodlei, Hristea, and Alberti~\cite{wodlei_2022}.
We summarize our results in the corresponding subsections.

\subsection{Lorenz System}
\label{ssec:Lorenz_results}

The Lorenz system models fluid dynamics over time. 
The differentials $dX/dt, dY/dt$, and $dZ/dt$ indicate change in fluid convection, horizontal temperature variation, and vertical temperature variation of a fluid. 
Integrating these differentials and choosing three control parameter values $\vA = \{ \sigma, \beta, \rho\}$, we define the  state space of this system as $\big( X(\vA,t), Y(\vA,t), Z(\vA,t) \big) \subset \vR^3$.
The parameters of $\vA$ describe the physical properties of the fluid, temperature difference in the fluid, and the physical properties of the overall system, respectively.
This system is mathematically modeled as follows: 
\begin{align*}
  \frac{dX}{dt}(\vA,t) & = \sigma \Big(y(t)-x(t)\Big), \\
  \frac{dY}{dt}(\vA,t) & = x(t)\Big(\rho-z(t)\Big)-y(t), \\ 
  \frac{dZ}{dt}(\vA,t) & = x(t)y(t)-\beta z(t). \\
\end{align*}
The default parameters are $\sigma = 10$ and $\beta = 8/3$, while $\rho$ is varied to analyze system dynamics.
Then the state space is given by $\Big( X(\rho,t),Y(\rho,t),Z(\rho,t) \Big)$, where a single trajectory within it is denoted as $\vx_\rho(\vT) = \{ \vx_\rho(t_0), \dots, \vx_\rho(t_{T-1}) \}$.
We choose the default initial conditions of the lorenz function in R, given by ($\vx_\rho(t_0) = \big( X(\rho,t_0), Y(\rho,t_0), Z(\rho,t_0) \big) = (-13,-14,47)$).
We fix $\vT = \{ 0, 0.01, \dots, 20 \}$.
We compute the average of each measure over 30 pairs of neighboring trajectories.
For each pair of trajectories $\{ \vx_\rho(\vT), \vy_\rho(\vT)\}$, we define the initial conditions as $\vx_\rho(t_0) = (-13,-14,47) +  \mathcal{N}(0,0.1)$ and $\vy_\rho(t_0) = \vx_\rho(t_0) + \mathcal{N}(0,0.0001)$, and the resulting trajectories by $\vx^{d,\sigma}_\rho(\vT) = \vx^d_\rho(\vT) + \mathcal{N}(0,\sigma \cdot \gamma_d)$ and $\vy^{d,\sigma}_\rho(\vT) = \vy^d_\rho(\vT) + \mathcal{N}(0,\sigma \cdot \omega_d)$, where $\gamma_d$ and $\omega_d$ are the standard deviations of the $d$-th dimension of $\vx_\rho(\vT)$ and $\vy_\rho(\vT)$, respectively, for $d=1, 2, 3$.
We vary our control parameter $\rho = \{90,90.1,\dots,103\}$ within the same range as varied in \cite{10.1063/5.0102421}, $\sigma = \{ 0,0.015,0.025 \}$, $\text{epslist} = \{ 0,0.1,\dots,8 \}$, and fit $\betaexp$ and $\lambda$ to $\epsilon = [0,6]$ and $t = [0,7]$, respectively. 

See our results in \autoref{fig:Lorenz_results}. 
As expected, our 0-persistence exponent is much more robust to small trajectory perturbations when compared to the Lyapunov exponent.
As well, our measure is also better-able to detect the periodicity around $\rho = 100.1$.
Note that by \autoref{rmk:bounds_Persistence_exp} and our $\epsilon$-fit, $\displaystyle 0 \leq \betaexp(X) \leq \frac{-\log \left( \frac{1}{2 \cdot (2000)} \right)}{6} \approx 1.38$.

\begin{figure}[ht!]
  \centering
  \includegraphics[width=\textwidth]{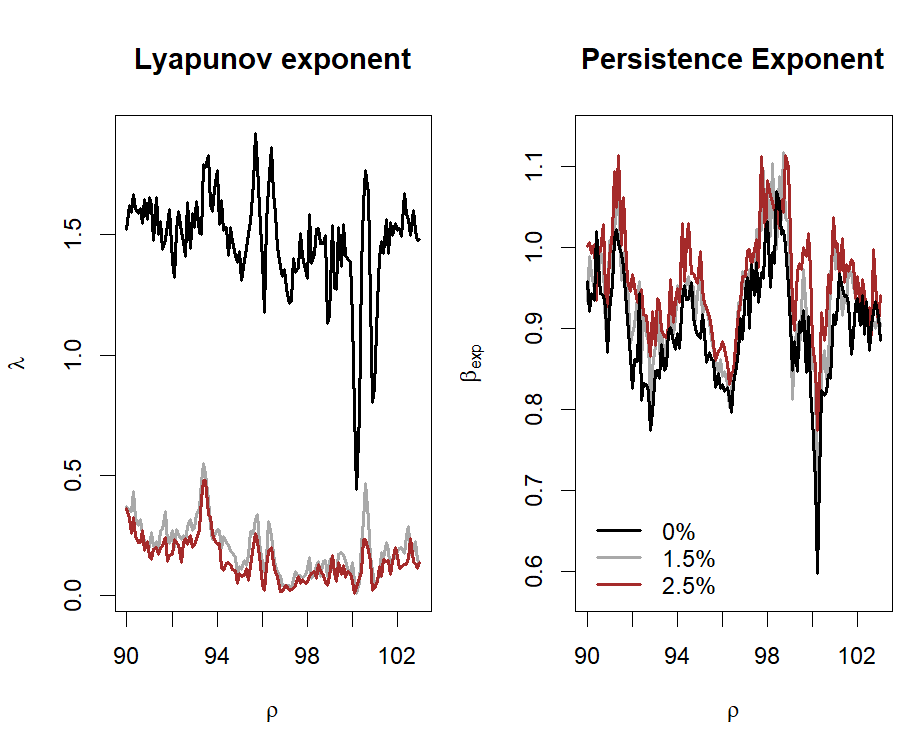}
  \caption{The average Lyapunov (left) and 0-persistence (right) exponents against increasing values of $\rho$ for varied noise levels and initial conditions in the Lorenz system.}
  \label{fig:Lorenz_results}
\end{figure}
\subsection{R\"{o}ssler System}
\label{ssec:Rossler_results}

The R\"{o}ssler system models the ribbon-like folding of taffy that occurs when a taffy pulling machine runs \cite{Rössler+1983+788+801}.
This system experiences a period-doubling route to chaos, in which its periodicity gradually increases as its control parameter varies until it mimics chaotic behavior \cite{hilborn2006}.
The differentials $dX/dt$, $dY/dt$, and $dZ/dt$ indicate changes in horizontal and vertical oscillatory behavior, and nonlinearity in the system over time.
Integrating these differentials and choosing three control parameter values $\vA = \{ a, b, c \}$ defines the state space $\Big( X,Y,Z \Big) \subset \vR^3$.
The parameters in $\vA$ control the linear stability, baseline drift, and chaoticity of the system, respectively. 
The R\"{o}ssler system is modeled as follows.
\begin{align*}
  \frac{dX}{dt}(\vA,t) & = - y(t) - z(t), \\
  \frac{dY}{dt}(\vA,t) & = x(t) + a \cdot y(t), \\
  \frac{dZ}{dt}(\vA,t) & = b + z(t) \cdot (x(t) - c).
\end{align*}

For this experiment, we follow similar parametric choices as those used by G\"{u}zel, Munch, and Khasawneh~\cite{10.1063/5.0102421}.
That is, we fix $b=2$ and $c=4$, and vary $a$ between 200 evenly-spaced values on $[0.37,0.42]$.
Then the state space is given by $\Big( X(a,t), Y(a,t), Z(a,t) \Big)$, where one of its trajectories is denoted as $\vx_a(\vT) = \{ \vx_a(t_0), \dots, \vx_a(t_{T-1}) \}$.
We fix $\vT =\{ 0, 1/15, \dots, 1000 \}$. 
We compute the average exponents over 30 pairs of trajectories.
For each pair of trajectories $\{ \vx_a(\vT), \vy_a(\vT) \}$, we define the initial conditions as $\vx_a(t_0) = (-0.4, 0.6, 1) + \mathcal{N}(0,0.001)$ and $\vy_a(t_0) = \vx_a(t_0) + \mathcal{N}(0,0.0001)$, and the resulting trajectories by $\vx_a^{d,\sigma}(\vT) = \vx_a^d(\vT) + \mathcal{N}(0, \sigma \cdot \gamma_d)$ and $\vy_a^{d,\sigma}(\vT) = \vy_a^d(\vT) + \mathcal{N}(0, \sigma \cdot \omega_d)$, where $\gamma_d$ and $\omega_d$ are the standard deviations of the $d$-th dimension of $\vx_a(\vT)$ and $\vy_a(\vT)$, respectively, for $d = 1, 2, 3$.
We set $\sigma = \{ 0, 0.0015, 0.0025 \}$, $\text{epslist} = \{ 0, 0.001, \dots, 0.5 \}$, and fit $\betaexp$ and $\lambda$ to $\epsilon = [0,0.15]$ and $t = [0,100]$, respectively.

See our results in \autoref{fig:Rossler_results}. 
We can see that $\betaexp$ is much more stable given added noise for increasing $a$ in comparison to $\lambda$.
Both measures detect periodicity in a similar manner around $a \approx 0.4$ and $a \approx 0.41$.
Note that by \autoref{rmk:bounds_Persistence_exp} and our $\epsilon$-fit, $\displaystyle 0 \leq \betaexp(X) \leq \frac{-\log \left( \frac{1}{2 \cdot (15000)} \right)}{0.15} \approx 68.73$.

\begin{figure}[ht!]
  \centering
  \includegraphics[width=0.95\textwidth]{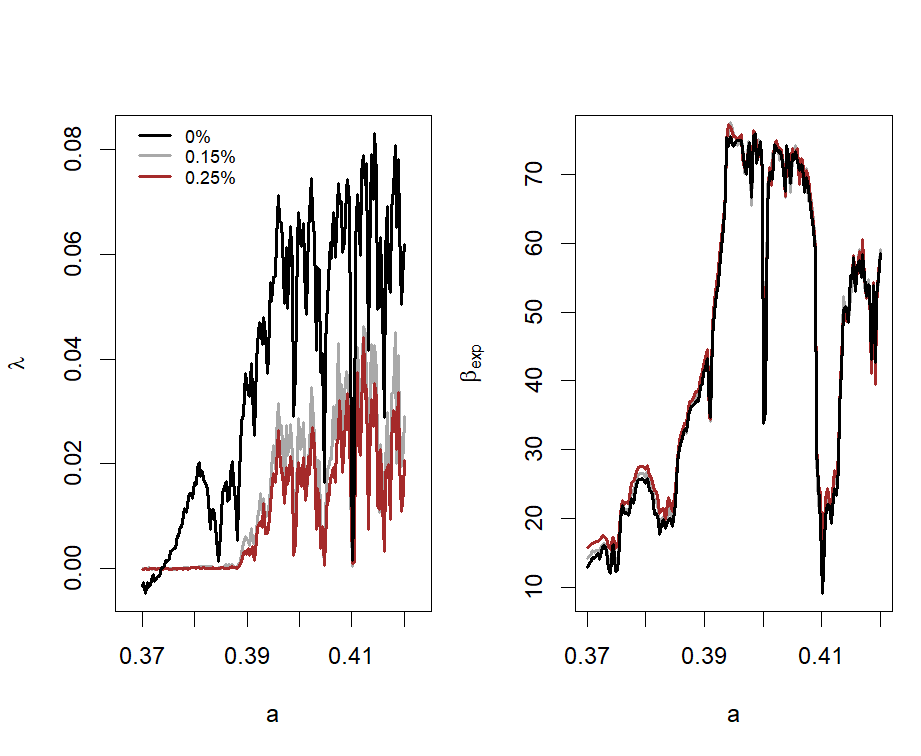}
  \caption{The average Lyapunov (left) and 0-persistence (right) exponents against increasing values of $a$ for varied noise levels and initial conditions in the R\"{o}ssler system.}
  \label{fig:Rossler_results}
\end{figure}
\subsection{Real Time Series Data}
\label{ssec:Real_results}

In this section, we compare the average stability of the 0-persistence and Lyapunov exponents of an autocatalytic chemical reaction studied by Wodlei, Hristea, and Alberti \cite{wodlei_2022}. 
We obtained a sped-up video of their experiment showing an unstirred, ferrin-catalyzed Belousov-Zhabotinsky reaction which captured the change in concentrations of the catalyst as the reaction progressed via a camera chip containing a color filter array.
We followed their experimental instructions by using ImageJ to preprocess this video data into a single time series describing the average concentration of ferrin over time.
We first extracted the average grayscale (i.e. ferrin concentration) value of each column of pixels in the video over time via a multikymograph.
To compute this graph, we extracted the average grayscale value of each column of 42 pixels centered at a yellow horizontal cross section in the video footage.
We highlight one such column of pixels at a chosen time via a white rectangle in the left image of \autoref{fig:ImageJ_results_BZrct}. 
The corresponding average grayscale values over time for this column of pixels are summarized via the white horizontal cross section in multikymograph shown in the bottom right of \autoref{fig:ImageJ_results_BZrct}.
The profile plot (i.e., time series) was then computed by averaging the grayscale values in each column of the multikymograph.
We then denoised this series via a local projection filter \cite{Kantz_Schreiber_2003}.
Since we wanted to denoise while preserving as many points as possible and any chaotic structure present, and since chaos occurs in at least three dimensions, we performed the filter with an embedding dimension of three.
We denote the resulting time series as $f_\text{avg}(\vT)$, for $\vT = \{ 0, 1, \dots,1527 \}$ the horizontal pixel values.
We define the corresponding embedding by $X = \text{SW}_{M,\tau} f_\text{avg}(\vT) = \big( f_\text{avg}(\vT),\dots,f_\text{avg}(\vT + M \tau) \big)$, where $\tau = 6$ and $M = 3$ are selected using the first minimum of the automutual information and Cao's False Nearest Neighbors test \cite{Wallot2018}.
Hence our embedding satisfies $|X| = 1510$.
We show a plot of $f_\text{avg}(\vT)$ and $X$ in the top middle and top right images of \autoref{fig:ImageJ_results_BZrct}.

To compute the maximal Lyapunov and 0-persistence exponents using Algorithms \ref{alg:Kantz} and \ref{alg:Persistence}, we first fix $\delta = c \cdot D_X$, where $D_X$ is the maximum pairwise distance between any two points in $X$.
We choose $c = 0.07$ by varying $c \in [0.01, 0.20]$ and obtaining stable measures of $\lambda \in [0.006,0.007]$ and $\betaexp \in [0.434,0.443]$ in the range $c \in [0.06,0.08]$, choosing $c$ as the mean value within this stable range.
Hence, we define $\delta \approx 23.27$ and select 100 evenly spaced filtration radii $\epsilon_r$ in $[0,\delta]$.
We fit our regression window for $\lambda$ to $t = [0, 75]$ (i.e., $t_{\max} = 75$, $t_{\text{initial}} = 0$, and $t_{\text{final}} = 75$), and for $\betaexp$ to $\epsilon = [0.2 \cdot \delta, 0.55 \cdot \delta]$ (i.e., $\epsilon_\text{initial} \approx 4.7$ and $\epsilon_\text{final} \approx 13.4$).
By \autoref{rmk:bounds_Persistence_exp}, $\displaystyle 0 \leq \betaexp(X) \leq \frac{-\log \left( \frac{1}{|X|} \right)}{8.7} \approx 0.842$.
We show the linear regressions and both exponents computed via Algorithms \ref{alg:Kantz} and \ref{alg:Persistence} in the top left and right images of \autoref{fig:Real_results}, respectively.
We can see that within this $\epsilon$-range, our embedded time-series trajectory undergoes just over half of its total possible exponential divergence.

\begin{figure}[ht!]
\centering
\includegraphics[width=\textwidth]{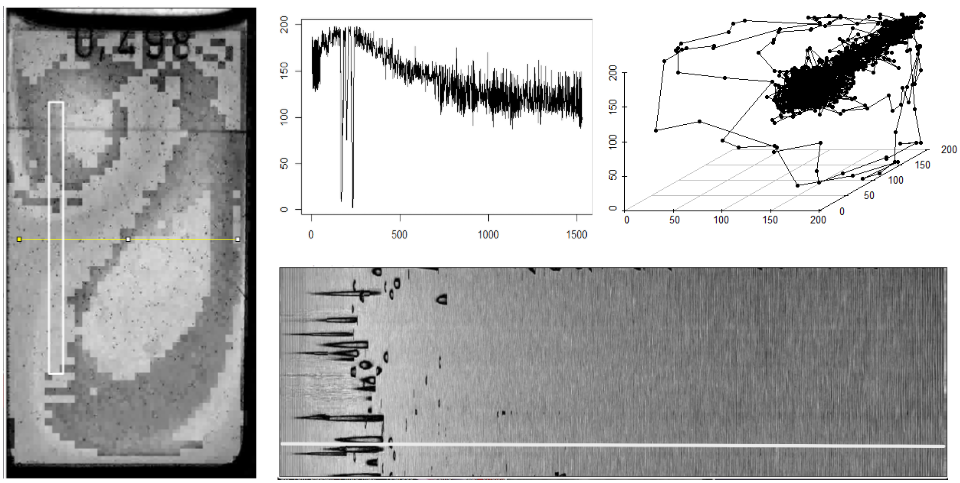}
\caption{An ImageJ screenshot of the video-d Belousov-Zhabotinski reaction~\cite{wodlei_2022} (left) and its resulting multikymograph (bottom right).
We also show the resulting denoised profile plot of average ferrin concentrations (top middle) and its sliding windows embedding (top right).}
\label{fig:ImageJ_results_BZrct}
\end{figure}

For our stability analysis, we vary $\sigma$ in $\{ 0.05, 0.1, \dots, 0.75 \}$
and define the embedding of $f_\text{avg}$ at noise level $\sigma$ as $X_\sigma = \text{SW}_{M,\tau}f_\text{avg}^\sigma(\vT)$, where $f_\text{avg}^\sigma(\vT) = f_\text{avg}(\vT) + \mathcal{N}(0,\sigma \cdot \gamma)$ is the time series at noise level $\sigma$ and $\gamma$ is the standard deviation of $f_\text{avg}(\vT)$. 
We finally compute the normalized average Lyapunov and 0-persistence exponents of the single trajectory $X_\sigma$ against $\sigma$ over 30 samples of $X_\sigma$.
For each $\sigma$, we fit our regressions for $\betaexp$ to $\epsilon \in [7,13]$ and $\lambda$ to $t \in [40,80]$.
We select our windows for $t$ and $\epsilon$ by producing a random sample of $\lambda$ and $\betaexp$ for each $\sigma$ in the list $\{ 0.1, 0.2, \dots, 0.7 \}$ and recording the window of best fit for each.
We record windows for $t$ and $\epsilon$ in the respective lists $\{ [7,13], [7,15], [7,17], [7,20] \}$ and $\{ [40,80], [30,80], [20,80], [20,90] \}$, choosing their intersections as the regression windows for each measure.
In \autoref{fig:Real_results}, can see that when adding up to 20\% Gaussian noise to $f_\text{avg}(\vT)$ (i.e., $\sigma \leq 0.2$), our 0-persistence exponent is more stable in comparison, as indicated by the normalized measures within the red vertical dashed lines laying between $[0.886,1]$ for $\lambda$ and $[0.976,1]$ for $\betaexp$.


\begin{figure}[ht!]
  \centering
  \includegraphics[width=0.95\textwidth]{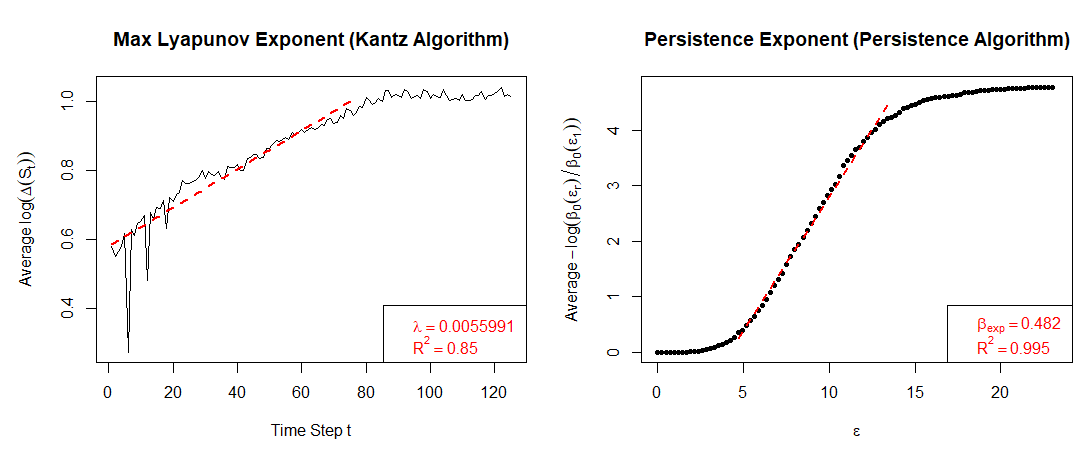}\\
  \vspace*{-0.1in}
  \includegraphics[width=0.95\textwidth]{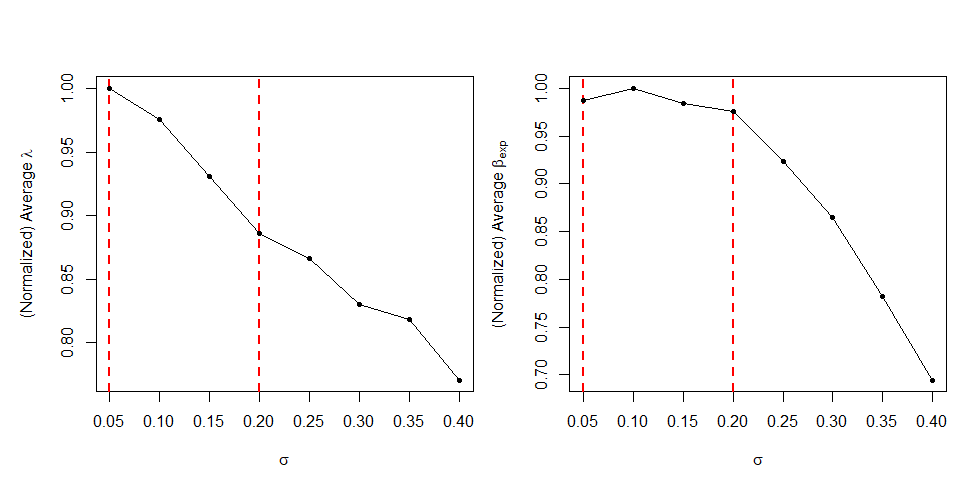}
  \caption{The maximal Lyapunov (top left) and 0-persistence (top right) exponents of the Belousov-Zhabotinsky reaction~\cite{wodlei_2022}.
  We also plot the normalized averages of the maximal Lyapunov (bottom left) and 0-persistence (bottom right) exponents against increasing Gaussian noise levels added to the time series of this reaction.
  }
  \label{fig:Real_results}
\end{figure}

\subsection{Correlation between Measures}\label{ssec:correlation}

The goal of this section is to experimentally support our hypothesis that if the maximal Lyapunov exponent identifies chaos, then so does our 0-persistence exponent (i.e.  Remark \ref{rmk:when_lambda_positive_implies_betaexp_positive}).
We prove the easy case, when we only consider the subcomplex of the Vietoris-Rips complex which only contains edges connecting points from both neighboring trajectories at the same time (\autoref{cor:positive_lambda_implies_positive_betaexp}), and it remains a goal of our future work to prove the generalized result.
For this experiment, we compare the correlation between the chaos predictions of $\lambda$ and $\betaexp$ using Matthew's Correlation Coefficient (MCC), as it is sensitive to imbalanced data \cite{ChJu2020}.
We assess the correlation on the Lorenz and R\"{o}ssler systems using the same experimental set-ups as defined in the respective subsections \ref{ssec:Lorenz_results} and \ref{ssec:Rossler_results}.

For each system, we define our chaos predictions as follows.
For measures produced from Lorenz trajectories, if $\lambda > c_{\lambda} \cdot 1.8$ and $\betaexp > c_{\betaexp} \cdot 1.1$ where $c_{\lambda}, c_{\betaexp} = \{0.2,0.3,\dots,0.8\}$, then $\lambda$ and $\betaexp$ predict the system to be chaotic (i.e. the true chaos as measured by $\lambda$ is denoted as the binary value of $1$ while the predicted chaos as measured by $\betaexp$ is denoted by $1$).
Otherwise, $\lambda$ predicts no chaos (i.e. the `true' chaos is captured by a 0 and the predicted chaos is captured by a 1).
We choose our values of $1.8$ and $1.1$ by the estimated upper bounds shown in Figure \ref{fig:Lorenz_results}.
We repeat this process for the R\"{o}ssler measures, but instead consider true and predicted measures of chaos to occur when $\lambda > c_{\lambda} \cdot 0.085$ and $\betaexp > c_{\betaexp} \cdot 75$.

For every combination $\{ c_{\lambda},c_{\betaexp} \}$, we compute the MCC score for predicted and true chaos values from $\betaexp$ and $\lambda$ over increasing values of $\rho$ (Lorenz) and $a$ (R\"{o}ssler).
We compute the predicted and true response for each value of $\rho$ and $a$ by using the average measures of $\betaexp$ and $\lambda$ generated over 20 pairs of neighboring trajectories sampled from the corresponding system.
We again define neighboring trajectories and $\rho$,$a$ values using the same set-up as described in subsections \ref{ssec:Lorenz_results} and \ref{ssec:Rossler_results}.
For each observed MCC, we approximate its significance via permutation testing \cite{NICHOLS2007253} by randomly permuting the predicted chaos responses from $\betaexp$ 500 times and recording the resulting MCC for each permutation.
We then define the $p$-value for the observed MCC as the proportion of random MCCs at least as large as the observed coefficient.
See \autoref{fig:correlation_results} for our results.
We show a heatmap of MCCs produce for each combination of $c_{\lambda}$ and $c_{\betaexp}$.
We overlay the $p$-value for each MCC in white text.
From these findings, we can see that significant correlation between both $\lambda$ and $\betaexp$ (i.e. MCC $> 0.5$, $p < 0.05$) is identified for both Lorenz and R\"{o}ssler systems.

\break

\begin{figure}[ht!]
\centering
\includegraphics[width=0.45\textwidth]{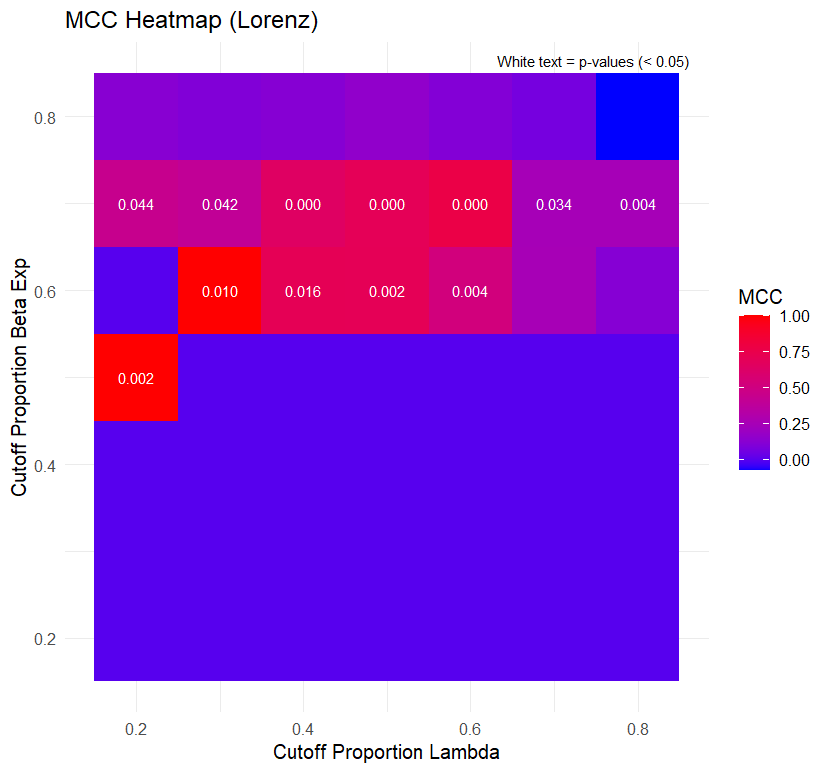}
\includegraphics[width=0.45\textwidth]{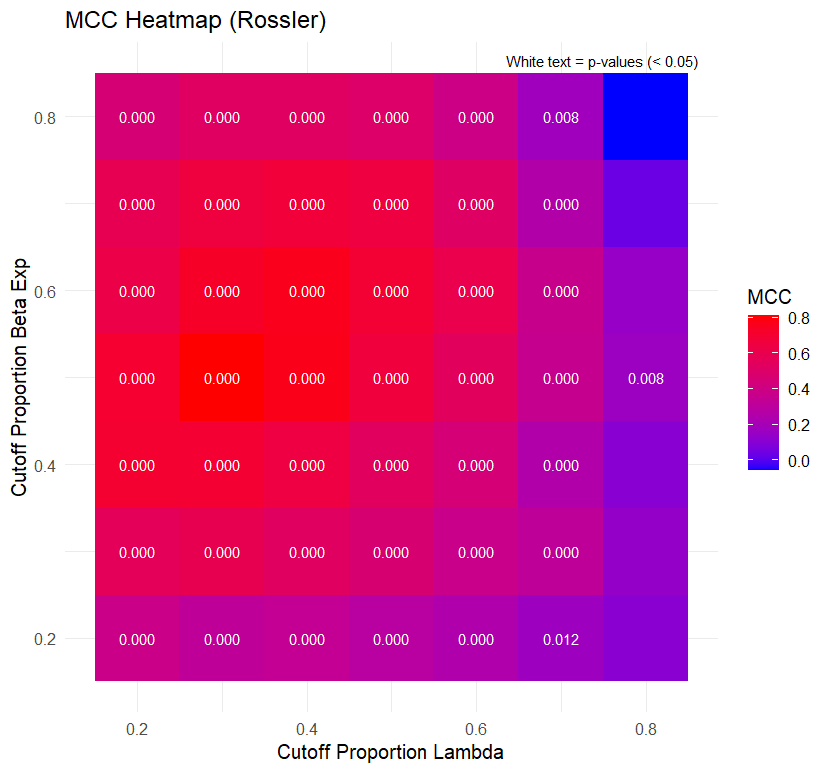}
\caption{Heatmap of Matthew's correlation coefficients and $p$-values for binary cutoffs used to produce chaos predictions with $\betaexp$ and true responses with $\lambda$.
We show results for the Lorenz (left) and R\"{o}ssler (right) systems.}
\label{fig:correlation_results}
\end{figure}

\section{Discussion}
\label{sec:discussion_future}

Although our work successfully provides a stable alternative to the Lyapunov exponent for noisy trajectories of dynamical systems, our 0-persistence exponent is only an approximation of chaos since it is only defined for a pair of trajectories sampled from a system, whereas the Lyapunov exponent is a measure defined for the entire system.
Despite this, our measure does show promising theoretical and experimental advantages when measuring chaos from real-world dynamics as indicated by our univariate time series results. 
Not only this, but our measure lays the groundwork for future intersections of dynamical systems and quantifying chaos with persistent homology.

We are further interested in exploring extensions of our exponent to higher homology dimensions.
However, in any dimension greater than zero, small trajectory perturbations can cause small higher-dimensional holes to appear suddenly for a short time during filtration, hence decreasing the stability of the corresponding Betti curve.
As such, we want to explore the use of a stable Betti number \cite{ScChLuRaOb2017} to construct such a chaos measure. 
Persistent homology has also been used to construct a measure of quasiperiodicity for systems \cite{TrPe2018}.
We have shown our measure to be robust to added noise in systems displaying period-doubling routes to chaos (i.e., the R\"{o}ssler system), however we are interested in using this quasiperiodicity measure to construct a measure of chaos robust to added noise for systems that undergo a quasiperiodic route to chaos \cite{hilborn2006}.
We further want to explore algorithm optimization for Algorithm \ref{alg:Persistence} as well as analyzing the behavior of $\betaexp$ for varying control parameters in higher dimensions (i.e., when we vary more than one parameter at a time to detect dynamic behavior).
Furthermore, for systems whose dynamics are better described by more than one parameter, we are interested in constructing a measure for chaos via multiparameter persistent homology.

\bibliography{timeseries,homology,DimRedn,Refs_Prot,dynamics,correlation}

\end{document}